\newtheorem{thm}{Theorem}[section]
\newtheorem{lem}[thm]{Lemma}
\newtheorem*{tha}{Theorem A}
\theoremstyle{definition}
\newcommand{\C}{{\mathbb C}}
\newcommand{\D}{{\mathbb D}}
\newcommand{\R}{{\mathbb R}}
\newcommand{\T}{{\mathbb T}}
\newcommand{\Z}{{\mathbb Z}}
\newcommand{\f}{\frac}
\newcommand{\ov}{\overline}
\newcommand{\al}{\alpha}
\newcommand{\be}{\beta}
\newcommand{\ga}{\gamma}
\newcommand{\ze}{\zeta}
\renewcommand{\th}{\theta}
\newcommand{\ph}{\varphi}
\newcommand{\const}{\text{\rm const}}
\numberwithin{equation}{section}
\title[A Rudin--de Leeuw type theorem]
{A Rudin--de Leeuw type theorem\\ 
for functions with spectral gaps}
\author{Konstantin M. Dyakonov}
\address{Departament de Matem\`atiques i Inform\`atica, IMUB, BGSMath, Universitat de Barcelona, Gran Via 585, E-08007 Barcelona, Spain}
\address{ICREA, Pg. Llu\'is Companys 23, E-08010 Barcelona, Spain}
\email{konstantin.dyakonov@icrea.cat}
\keywords{Hardy space, spectral gap, extreme point, outer function, inner function, finite Blaschke product}
\subjclass[2010]{30H10, 30J10, 42A32, 46A55}
\thanks{Supported in part by grant MTM2017-83499-P from El Ministerio de Econom\'ia y Competitividad (Spain) and grant 2017-SGR-358 from AGAUR (Generalitat de Catalunya).}
\begin{document}
\begin{abstract} 
Our starting point is a theorem of de Leeuw and Rudin that describes the extreme points of the unit ball in the Hardy space $H^1$. We extend this result to subspaces of $H^1$ formed by functions with smaller spectra. More precisely, given a finite set $\mathcal K$ of positive integers, we prove a Rudin--de Leeuw type theorem for the unit ball of $H^1_{\mathcal K}$, 
the space of functions $f\in H^1$ whose Fourier coefficients $\widehat f(k)$ vanish for all $k\in\mathcal K$. 
\end{abstract}

\maketitle

\section{Introduction and main result}

Let $\T$ stand for the unit circle $\{\ze\in\C:|\ze|=1\}$, endowed with normalized Lebesgue measure, and let $L^1=L^1(\T)$ be the space of all complex-valued integrable functions $f$ on $\T$, with norm 
\begin{equation}\label{eqn:lonenormcr}
\|f\|_1:=\f 1{2\pi}\int_\T|f(\ze)|\,|d\ze|.
\end{equation}
The {\it Fourier coefficients} of a function $f\in L^1$ are the numbers
$$\widehat f(k):=\f 1{2\pi}\int_\T\ov\ze^kf(\ze)\,|d\ze|,\qquad k\in\Z,$$
and the set 
$$\text{\rm spec}\,f:=\{k\in\Z:\,\widehat f(k)\ne0\}$$
is called the {\it spectrum} of $f$. 

\par Further, the {\it Hardy space} $H^1$ is defined by 
$$H^1:=\{f\in L^1:\,\text{\rm spec}\,f\subset\Z_+\}$$
and normed as above; here $\Z_+:=\{0,1,2,\dots\}$. The harmonic extension (given by the Poisson integral) of a function $f\in H^1$ to the disk $\D:=\{z\in\C:|z|<1\}$
is actually holomorphic there (see, e.g., \cite[Chapter II]{G}), so we may view elements of $H^1$ as holomorphic functions on $\D$. Recall also that a non-null function $F\in H^1$ is said to be {\it outer} if 
$$\log|F(0)|=\f 1{2\pi}\int_\T\log|F(\ze)|\,|d\ze|,$$
whereas a function $I$ of class $H^\infty:=H^1\cap L^\infty(\T)$ is termed {\it inner} if $|I|=1$ a.e. on $\T$. It is well known that a generic function $f\in H^1$, $f\not\equiv0$, admits an (essentially unique) factorization of the form 
\begin{equation}\label{eqn:fifcr}
f=IF,
\end{equation}
where $I$ is inner and $F$ is outer. We refer to any of \cite{G, Hof} or \cite{K} for basic facts about Hardy spaces, including the canonical factorization theorem just mentioned. 

\par This note is motivated by a beautiful theorem of de Leeuw and Rudin, which describes the extreme points of the unit ball in $H^1$. Before stating it, we need to introduce yet another piece of notation. Namely, given a Banach space $X=(X,\|\cdot\|)$, we write  
$$\text{\rm ball}(X):=\{x\in X:\,\|x\|\le1\}.$$
Finally, we recall that an element $x$ of $\text{\rm ball}(X)$ is said to be an {\it extreme point} thereof if it is not an interior point of any line segment contained in $\text{\rm ball}(X)$. Of course, any such point $x$ satisfies $\|x\|=1$. 

\par The Rudin--de Leeuw result that interests us here reads as follows. 

\begin{tha} A function $f\in H^1$ with $\|f\|_1=1$ is an extreme point of $\text{\rm ball}(H^1)$ if and only if it is outer. 
\end{tha}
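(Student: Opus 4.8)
The plan is to translate extremality of $f$ into a concrete multiplier condition and then read that condition off from the canonical factorization. Normalizing $\|f\|_1=1$, recall that $f$ fails to be extreme exactly when there is some $g\in H^1$, $g\not\equiv0$, with $\|f+g\|_1\le1$ and $\|f-g\|_1\le1$. Since $(f+g)+(f-g)=2f$, the triangle inequality gives $\|f+g\|_1+\|f-g\|_1\ge2$, so in fact both norms equal $1$ and we sit in the equality case of the $L^1$ triangle inequality. Using the standard Hardy space fact that $f\in H^1\setminus\{0\}$ has $\log|f|\in L^1(\T)$, hence $f\ne0$ a.e., I would set $\ph:=g/f$ (defined a.e.\ on $\T$) and unwind the equality case pointwise: $|f+g|+|f-g|=2|f|$ a.e.\ says that $\ph(\ze)$ lies on the segment $[-1,1]$, i.e.\ $\ph$ is real with $|\ph|\le1$ a.e., while $\|f\pm g\|_1=1$ becomes $\int_\T|f|\,\ph\,|d\ze|=0$. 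Conversely, any such $\ph$ produces an admissible perturbation $g=\ph f$. Finally, replacing $\ph$ by $\ph-\big(\int_\T|f|\ph\,|d\ze|\big)\big/\big(\int_\T|f|\,|d\ze|\big)$ — which preserves $\ph f\in H^1$ — and rescaling, I can discard both side conditions and reach the clean criterion: $f$ is an extreme point if and only if \emph{every} bounded real function $\ph$ on $\T$ with $\ph f\in H^1$ is constant.

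With this criterion in hand the theorem splits in two. If $f$ is outer, then $1/f$ belongs to the Smirnov class $N^+$, so for any bounded real $\ph$ with $\ph f\in H^1$ the quotient $\ph=(\ph f)\cdot(1/f)$ again lies in $N^+$; being bounded on $\T$, it belongs to $H^\infty$ by Smirnov's theorem ($N^+\cap L^\infty=H^\infty$). A holomorphic function with real boundary values is constant — its Fourier spectrum is both symmetric about $0$ and contained in $\Z_+$, hence reduces to $\{0\}$ — so $\ph$ is constant and $f$ is extreme.

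For the converse, suppose $f=IF$ with $I$ a non-constant inner function and $F$ outer. The decisive observation is that on $\T$ one has $\ov I f=|I|^2F=F\in H^1$. Hence for any $\la\ne0$ the bounded real function $\ph:=\Re(\la\ov I)=\tfrac12(\la\ov I+\ov\la I)$ satisfies $\ph f=\tfrac12(\la F+\ov\la I^2F)\in H^1$, and $\ph$ is non-constant: were $\la\ov I+\ov\la I\equiv 2c$ on $\T$, multiplying by $I$ would give $\ov\la I^2-2cI+\la=0$ on $\T$, hence identically on $\D$ by analyticity, forcing $I$ to take values in a fixed two-point set and thus to be constant. By the criterion, $f$ is not extreme. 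I expect the main obstacle to be the reduction in the first paragraph — carrying out the $L^1$ equality analysis rigorously and justifying the passage $\ph=g/f$ via $f\ne0$ a.e. — together with the Smirnov-class input ($1/f\in N^+$ for outer $f$ and $N^+\cap L^\infty=H^\infty$) that powers the outer direction; by contrast, the non-outer construction is immediate once the identity $\ov I f=F$ is noticed.
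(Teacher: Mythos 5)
Your proof is correct. Note that the paper itself does not prove Theorem A --- it quotes it with references to de Leeuw--Rudin, Garnett and Hoffman --- so the natural comparison is with the paper's Lemma \ref{lem:charextrhonecr}, whose engine is exactly the criterion you derive in your first paragraph: $f$ of unit norm is non-extreme if and only if there is a nonconstant $h\in L^\infty_\R$ with $fh\in X$. The paper imports this criterion from Gamelin without proof, whereas you establish it directly via the equality case of the $L^1$ triangle inequality (using $f\ne0$ a.e.\ to form $\ph=g/f$, locating $\ph$ in $[-1,1]$, and then absorbing the constraints $|\ph|\le1$ and $\int_\T|f|\ph\,|d\ze|=0$ by subtracting the weighted mean and rescaling); that reduction is carried out correctly. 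Your outer direction ($1/f\in N^+$, hence $\ph\in N^+\cap L^\infty=H^\infty$, and a bounded analytic function real on $\T$ has spectrum $\{0\}$) is precisely the Smirnov-class argument the paper uses inside the proof of Lemma \ref{lem:charextrhonecr}, and your non-outer direction via $\ph=\Re(\la\ov I)$, with the quadratic $\ov\la I^2-2cI+\la\equiv0$ forcing a nonconstant inner $I$ into a finite value set, is a clean version of the standard de Leeuw--Rudin perturbation. In short: correct, complete in outline, and the same approach that underlies the paper's machinery, with the added value that you prove the Gamelin criterion rather than citing it.
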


\par The original proof can be found in \cite{dLR}; see also \cite[Chapter IV]{G} and \cite[Chapter 9]{Hof} for alternative presentations. 

\par We are concerned with certain finite-dimensional perturbations of Theorem A. Specifically, the question is what happens if $H^1$ gets replaced by a smaller subspace, whose elements are required to have some additional spectral holes (but not too many of them). To be more precise, we fix a finite number (say, $M$) of positive integers 
$$k_1<k_2<\dots<k_M$$
and restrict our attention to the functions $f\in H^1$ that satisfy 
$$\widehat f(k_1)=\dots=\widehat f(k_M)=0.$$
The subspace comprised of such functions is thus
$$H^1_{\mathcal K}:=\{f\in H^1:\,\text{\rm spec}\,f\subset
\Z_+\setminus\mathcal K\},$$
where 
\begin{equation}\label{eqn:defmathcalkcr}
\mathcal K:=\{k_1,\dots,k_M\}.
\end{equation}
Our purpose here is to characterize the extreme points of $\text{\rm ball}\left(H^1_{\mathcal K}\right)$, the unit ball of $H^1_{\mathcal K}$ endowed with the $L^1$ norm \eqref{eqn:lonenormcr}. 

\par Because $H^1_{\mathcal K}$ has finite codimension in $H^1$, one would not expect the situation to be very different from that in Theorem A. So, {\it a priori}, one feels that the extreme points $f$ of $\text{\rm ball}\left(H^1_{\mathcal K}\right)$ should probably be the unit-norm functions which are fairly close to being outer, in some sense or other. Our characterization, stated below in terms of the function's canonical factorization \eqref{eqn:fifcr}, justifies this guess and gives a precise meaning to the notion of a \lq\lq nearly outer" function that arises. 

\par First of all, it turns out that if $f\in\text{\rm ball}\left(H^1_{\mathcal K}\right)$ is extreme, then its inner factor, $I$, must be a {\it finite Blaschke product} whose degree (i.e., the number of its zeros) does not exceed 
$M(=\#\mathcal K)$. This means that $I$ is writable, possibly after multiplication by a unimodular constant, as
\begin{equation}\label{eqn:finblaprodegmcr}
I(z)=\prod_{j=1}^m\f{z-a_j}{1-\ov a_jz},
\end{equation}
where $0\le m\le M$ and $a_1,\dots,a_m$ are points in $\D$. (When $m=0$, it is of course understood that $I(z)=1$.) Secondly---and perhaps less predictably---there is an interplay between the two factors, $I$ and $F$, in \eqref{eqn:fifcr} which we now describe. 

\par Assuming that $I$ is given by \eqref{eqn:finblaprodegmcr} and $F\in H^1$ is outer, we consider the function 
\begin{equation}\label{eqn:deffzerocr}
F_0(z):=F(z)\prod_{j=1}^m(1-\ov a_jz)^{-2}
\end{equation}
and its coefficients 
$$C_k:=\widehat F_0(k),\qquad k\in\Z.$$
Since $a_1,\dots,a_m\in\D$, it follows that $F_0\in H^1$ and so $C_k=0$ for all $k<0$. Also, we define 
$$A(k):=\text{\rm Re}\,C_k,\qquad B(k):=\text{\rm Im}\,C_k\qquad(k\in\Z)$$
and introduce, for $j=1,\dots,M$ and $l=0,\dots,m$, the numbers 
$$A^+_{j,l}:=A(k_j+l-m)+A(k_j-l-m),\qquad B^+_{j,l}:=B(k_j+l-m)+B(k_j-l-m)$$
and
$$A^-_{j,l}:=A(k_j+l-m)-A(k_j-l-m),\qquad B^-_{j,l}:=B(k_j+l-m)-B(k_j-l-m).$$
(The integers $k_j$ are, of course, the same as in \eqref{eqn:defmathcalkcr}.) Next, we build the $M\times(m+1)$ matrices 
\begin{equation}\label{eqn:plusmatcr}
\mathcal A^+:=\left\{A^+_{j,l}\right\},\qquad\mathcal B^+:=\left\{B^+_{j,l}\right\}
\end{equation}
and the $M\times m$ matrices
\begin{equation}\label{eqn:minusmatcr}
\mathcal A^-:=\left\{A^-_{j,l}\right\},\qquad
\mathcal B^-:=\left\{B^-_{j,l}\right\}.
\end{equation}
Here, the row index $j$ always runs from $1$ to $M$. As to the column index $l$, it runs from $0$ to $m$ for each of the two matrices in \eqref{eqn:plusmatcr}, and from $1$ to $m$ for each of those in \eqref{eqn:minusmatcr}. 
\par Finally, we need the block matrix 
\begin{equation}\label{eqn:blockmatrixcr}
\mathfrak M=\mathfrak M_{\mathcal K}\left(F,\{a_j\}_{j=1}^m\right):=
\begin{pmatrix}
\mathcal A^+ & \mathcal B^- \\
\mathcal B^+ & -\mathcal A^-
\end{pmatrix},
\end{equation}
which has $2M$ rows and $2m+1$ columns. 

\par Now we are in a position to state our main result, which extends Theorem A from $H^1$ to $H^1_{\mathcal K}$. To keep on the safe side, we specify that the number $M:=\#\mathcal K$ is also allowed to be $0$; in this special case, we have $\mathcal K=\emptyset$, so that $H^1_{\mathcal K}$ reduces to $H^1$ and we are back to the classical situation. Our main concern is, however, the case where $M$ is a positive integer. 

\begin{thm}\label{thm:extrpoipuhonecr} Suppose that $f\in H^1_{\mathcal K}$ and $\|f\|_1=1$. Assume further that $f=IF$, where $I$ is inner and $F$ is outer. Then $f$ is an extreme point of $\text{\rm ball}(H^1_{\mathcal K})$ if and only if the following two conditions hold: 
\par{\rm (a)} $I$ is a finite Blaschke product whose degree, say $m$, does not exceed $M$.
\par{\rm (b)} The matrix $\mathfrak M=\mathfrak M_{\mathcal K}\left(F,\{a_j\}_{j=1}^m\right)$, built as above from $F$ and the zeros $\{a_j\}_{j=1}^m$ of $I$, has rank $2m$.
\end{thm}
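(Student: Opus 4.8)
The plan is to convert the extreme‑point problem into a question of linear algebra about a space of real multipliers. First I would isolate the following reformulation: a unit‑norm $f\in H^1_{\mathcal K}$ fails to be an extreme point of $\operatorname{ball}(H^1_{\mathcal K})$ if and only if there is a \emph{nonconstant} real‑valued $\phi\in L^\infty(\T)$ with $\phi f\in H^1_{\mathcal K}$. Indeed, if $f$ is the midpoint of a nondegenerate segment $[f-h,f+h]$ in the ball, then adding the inequalities $\|f\pm h\|_1\le1$ and comparing with the pointwise bound $|f+h|+|f-h|\ge2|f|$ forces a.e.\ equality there, which is possible only when $h/f$ is real with $|h/f|\le1$ a.e.; thus $h=\phi f$ with $\phi$ real and bounded. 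Conversely, given such a nonconstant $\phi$, I would replace it by $\phi-\f1{2\pi}\int_\T|f|\phi\,|d\ze|$ and observe that $f\pm\eps\phi f$ then remain in $\operatorname{ball}(H^1_{\mathcal K})$ for small $\eps>0$. Hence $f$ is extreme precisely when the real‑linear space $\mathcal R_{\mathcal K}(f):=\{\phi=\ov\phi\in L^\infty:\phi f\in H^1_{\mathcal K}\}$ reduces to the constants.

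The core is to understand the ambient space $\mathcal R(f):=\{\phi=\ov\phi\in L^\infty:\phi f\in H^1\}$, in which the $\mathcal K$‑conditions are dropped. Writing $f=IF$ and $g=\phi f$, reality of $\phi$ amounts to the identity $g\ov F=\ov g\,I^2F$ on $\T$, equivalently $G:=g/F$ (which lies in the Smirnov class $N^+$) satisfies $G=I^2\ov G$. Through $\phi=\ov I\,G$, the bounded $\phi\in\mathcal R(f)$ correspond to the real vectors of the canonical conjugation $G\mapsto I^2\ov G$ on the model space $K_{I^2}=H^2\ominus I^2H^2$, augmented by the single extra direction $G\in\R\cdot I$ that produces the constant multipliers; hence $\dim_\R\mathcal R(f)=\dim_\C K_{I^2}+1$. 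Since $\dim_\C K_\Theta$ is finite precisely when $\Theta$ is a finite Blaschke product, in which case it equals $\deg\Theta$, this yields the dichotomy: $\dim_\R\mathcal R(f)=2m+1$ when $I$ is a finite Blaschke product of degree $m$, and $\dim_\R\mathcal R(f)=\infty$ otherwise. Establishing this dichotomy—above all the infinite‑dimensionality when $I$ carries a nontrivial singular inner factor—is the main obstacle; I expect to settle it through the model‑space picture rather than by ad hoc constructions, keeping the multipliers bounded so that they truly belong to $\mathcal R(f)$.

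With the dichotomy in hand, the necessity of (a) is immediate: if $f$ is extreme then $\mathcal R_{\mathcal K}(f)$ is one‑dimensional, and since $\mathcal R_{\mathcal K}(f)$ is the kernel of the real‑linear evaluation map $T\colon\mathcal R(f)\to\C^M$, $\phi\mapsto(\widehat{\phi f}(k_1),\dots,\widehat{\phi f}(k_M))$, we get $\dim_\R\mathcal R(f)\le1+2M<\infty$. Thus $I$ must be a finite Blaschke product \eqref{eqn:finblaprodegmcr} of some degree $m$ with $2m+1\le2M+1$, i.e.\ $m\le M$.

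Finally, assuming $I$ is the Blaschke product \eqref{eqn:finblaprodegmcr}, I would make $\mathcal R(f)$ explicit and read off the rank condition. With $p(z)=\prod_{j=1}^m(z-a_j)$ and $p^*(z)=\prod_{j=1}^m(1-\ov a_jz)$ one has $F=F_0(p^*)^2$ and $f=pp^*F_0$, and a short computation shows that $\phi\in\mathcal R(f)$ iff $\phi=v/(pp^*)$ for a self‑inversive polynomial $v=\sum_{i=0}^{2m}v_iz^i$, $v_i=\ov v_{2m-i}$; this recovers $\dim_\R\mathcal R(f)=2m+1$. For such $\phi$ one has $\phi f=vF_0$, so the $\mathcal K$‑conditions become $\sum_{i=0}^{2m}v_iC_{k_j-i}=0$ for $j=1,\dots,M$. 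Separating real and imaginary parts and eliminating $v_{m+1},\dots,v_{2m}$ via the self‑inversive symmetry expresses these $2M$ real equations as a single real system $\mathfrak M\,x=0$, where $x$ collects the $2m+1$ real parameters $\Re v_m,\Re v_{m-1},\dots,\Re v_0,\Im v_{m-1},\dots,\Im v_0$ and $\mathfrak M$ is exactly the block matrix \eqref{eqn:blockmatrixcr} built from the blocks \eqref{eqn:plusmatcr}–\eqref{eqn:minusmatcr}. The constant multipliers correspond to $v=c\,pp^*$, which is self‑inversive, so this line always lies in $\ker\mathfrak M$ and $\operatorname{rank}\mathfrak M\le2m$. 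By the reformulation in the first paragraph, $f$ is extreme iff $\ker\mathfrak M$ is exactly this line, that is, iff $\dim\ker\mathfrak M=1$, i.e.\ $\operatorname{rank}\mathfrak M=2m$, which is condition (b). The only routine work left is the index‑and‑sign bookkeeping needed to match the explicit entries with \eqref{eqn:plusmatcr}–\eqref{eqn:blockmatrixcr}.
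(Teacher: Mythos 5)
Your overall architecture coincides with the paper's: the reformulation in your first paragraph is exactly Lemma \ref{lem:charextrhonecr} (which the paper quotes from Gamelin, while you supply the short proof yourself), and your final paragraph --- real multipliers $\phi=v/(pp^*)$ with $v$ self-inversive of degree at most $2m$, $\phi f=vF_0$, the $2M$ real equations $\mathfrak M x=0$, the line $v=c\,pp^*$ always lying in the kernel, extremality iff $\operatorname{rank}\mathfrak M=2m$ --- is precisely the paper's Step~2, with your self-inversive polynomials playing the role of the $m$-symmetric polynomials of Lemma \ref{lem:genformrealcr} and $pp^*$ being the paper's $p_0=I/\Phi_0$. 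That part is correct, and the remaining index-and-sign bookkeeping is indeed routine.

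The genuine gap is the dichotomy you yourself flag as ``the main obstacle'': that $\dim_\R\mathcal R(f)=\infty$ whenever $I$ is not a finite Blaschke product. Everything else hinges on this (it is exactly what makes (a) necessary), and you assert it without proof. The difficulty is real: the fixed-point set of the conjugation $G\mapsto I^2\ov G$ on $K_{zI^2}$ is an infinite-dimensional real space, but its elements are a priori only in $H^2$, whereas membership in $\mathcal R(f)$ requires $\phi=\ov I\,G\in L^\infty$, i.e.\ $G\in H^\infty$; and $G=g/F$ with $g=\phi f\in H^1$ only places $G$ in $N^+$, so boundedness must be arranged, not assumed. Your route can be completed --- for instance, the reproducing kernels of $K_{zI^2}$ are bounded, the conjugation preserves boundedness, and the real vectors $\psi+C\psi$, $i(\psi-C\psi)$ built from them span a dense, hence infinite-dimensional, real subspace of the fixed-point set --- but some such argument must actually be supplied. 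The paper resolves precisely this point differently: by Frostman's theorem $(I-w)/(1-\ov wI)$ is a Blaschke product for a suitable $w\in\D$; when (a) fails it has at least $M+1$ zeros, a factor $\ph_1$ of degree exactly $N=M+1$ is split off, and explicit \emph{bounded} multipliers $G=g^2p\Phi\ph_2$ are produced from the $(2M+3)$-dimensional space of $N$-symmetric polynomials $p$, of which a subspace of dimension at least $3$ survives the $2M$ linear constraints. A further cosmetic inaccuracy in your second paragraph: $I\notin K_{I^2}$ unless $I(0)=0$, so the space of bounded real multipliers is not ``the real vectors of the conjugation on $K_{I^2}$ plus $\R\cdot I$''; the correct statement is that $G\in H^2$ together with $G=I^2\ov G$ forces $G\in K_{zI^2}$, whose complex dimension is $2m+1$ --- your count is right, but the decomposition you give is not.
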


\par To see a simple example, suppose that $\mathcal K$ consists of a single element, an integer $k(=k_1)$ with $k\ge2$. Thus, $\mathcal K=\{k\}$ and the subspace in question is 
$$H^1_{\{k\}}:=\{f\in H^1:\,\widehat f(k)=0\}.$$
Now let $F\in H^1$ be an outer function with $\|F\|_1=1$ and $\widehat F(k-1)=0$; then put $f(z):=zF(z)$. Clearly, $f\in H^1_{\{k\}}$ and $\|f\|_1=1$. Using Theorem \ref{thm:extrpoipuhonecr} with $M=m=1$, we verify (via a calculation, which we omit) that $f$ is an extreme point of $\text{\rm ball}\left(H^1_{\{k\}}\right)$ if and only if $|\widehat F(k-2)|\ne|\widehat F(k)|$. 

\par As regards possible applications of Theorem \ref{thm:extrpoipuhonecr}, one may recall first that Theorem A was crucial in describing the isometries of $H^1$; see \cite{dLRW} and \cite[Chapter 9]{Hof}. It is therefore conceivable that Theorem \ref{thm:extrpoipuhonecr} might serve a similar purpose in the $H^1_{\mathcal K}$ setting. 

\par We conclude this section by mentioning several types of subspaces in $H^1$, other than $H^1_{\mathcal K}$, where the geometry of the unit ball has been studied. This was done for shift-coinvariant subspaces \cite{DKal, DSib} and, more generally, for kernels of Toeplitz operators in $H^1$ \cite{DPAMS}. Also considered were spaces of polynomials of fixed degree, along with their Paley--Wiener type counterparts \cite{DMRL2000}, and quite recently, spaces of {\it lacunary} polynomials with prescribed spectral gaps \cite{DAdv2021}. This last-mentioned paper is especially close in spirit to our current topic. 

\par The rest of this note is devoted to proving Theorem \ref{thm:extrpoipuhonecr}. The bulk of the proof is deferred to Section 3 below, while Section 2 provides a couple of preliminary lemmas. The proofs are somewhat sketchy; full details and a more complete discussion can be found in \cite{DARX}. There, we also supplement Theorem \ref{thm:extrpoipuhonecr} with a result concerning the exposed points of $\text{\rm ball}(H^1_{\mathcal K})$. 

\section{Preliminaries} 

Two lemmas will be needed. When stating them, we write $L^\infty_\R$ for the set of real-valued functions in $L^\infty=L^\infty(\T)$. 

\begin{lem}\label{lem:charextrhonecr} Let $X$ be a subspace of $H^1$. Suppose that $f\in X$ is a function with $\|f\|_1=1$ whose canonical factorization is $f=IF$, with $I$ inner and $F$ outer. The following conditions are equivalent. 
\par{\rm (i)} $f$ is not an extreme point of $\text{\rm ball}(X)$. 
\par{\rm (ii)} There exists a function $G\in H^\infty$, other than a constant multiple of $I$, for which $G/I\in L^\infty_\R$ and $FG\in X$. 
\end{lem}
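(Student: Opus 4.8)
The plan is to translate the (non-)extremality of $f$ into the existence of an admissible perturbation, and then to read off the function $G$ from it. Recall that $f$ fails to be an extreme point of $\text{ball}(X)$ precisely when there is a nonzero $h\in X$ with $f+h$ and $f-h$ both lying in $\text{ball}(X)$: indeed, $f$ is then the midpoint of the nondegenerate segment $[f-h,f+h]$, and conversely any segment through $f$ yields such an $h$ after rescaling. Thus the lemma reduces to matching up such perturbations $h$ with the functions $G$ described in (ii).

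For the implication (i) $\Rightarrow$ (ii), I would start from a nonzero $h\in X$ with $\|f\pm h\|_1\le1$. Adding the two inequalities and using the pointwise bound $|f+h|+|f-h|\ge|(f+h)+(f-h)|=2|f|$ gives $\frac1{2\pi}\int_\T(|f+h|+|f-h|)\,|d\ze|\ge 2\|f\|_1=2$; since the left-hand side is at most $2$, equality must hold pointwise a.e. The equality case of the triangle inequality forces $f+h$ and $f-h$ to be nonnegative multiples of $f$ at a.e. point of $\T$, so that $\ph:=h/f$ is real-valued with $\|\ph\|_\infty\le1$ (here I use $f=IF\ne0$ a.e., $F$ being outer). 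I then set $G:=(f+h)/F$. Since $f+h\in H^1$ and $F$ is outer, $G$ lies in the Smirnov class $N^+$, while $|G|=|f+h|/|F|=|1+\ph|$ is bounded on $\T$; hence $G\in H^\infty$. Finally $G/I=1+\ph\in L^\infty_\R$ and $FG=f+h\in X$, and $G$ is not a constant multiple of $I$, because that would make $\ph$ a real constant, which together with $\|f\pm h\|_1\le1$ would force $\ph\equiv0$, i.e. $h\equiv0$.

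For the converse (ii) $\Rightarrow$ (i), I would use $G$ to manufacture a perturbation. Writing $\psi:=G/I\in L^\infty_\R$ (a nonconstant function), I set $\la:=\frac1{2\pi}\int_\T|F|\psi\,|d\ze|\in\R$ and $h:=\eps\,(FG-\la f)=\eps\,F(G-\la I)$ for a small $\eps>0$. Then $h\in X$ (since $FG,f\in X$) and $h\ne0$ (else $G=\la I$). On $\T$ one has $h/f=\eps(\psi-\la)$, which is real, so choosing $\eps$ small enough that $|\eps(\psi-\la)|\le1$ a.e. gives $|f\pm h|=|F|\,(1\pm\eps(\psi-\la))$; integrating and using $\|F\|_1=1$ together with the choice of $\la$ yields $\|f+h\|_1=\|f-h\|_1=1$. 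Hence $f=\frac12((f+h)+(f-h))$ is the midpoint of a nondegenerate segment in $\text{ball}(X)$, so $f$ is not extreme.

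The step I expect to be the crux is the passage $G=(f+h)/F\in H^\infty$ in (i) $\Rightarrow$ (ii): a priori the quotient only lies in the Smirnov class, and it is the outerness of $F$ (so that no singular or zero divisor obstructs the division) combined with the boundedness of the boundary modulus that upgrades it to $H^\infty$ via Smirnov's maximum principle. The remaining computations are routine, but one must keep careful track of the normalizations and of the exact equality case in the triangle inequality, since these are precisely what pin down the realness of $G/I$ and the membership $FG\in X$.
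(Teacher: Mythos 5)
Your proof is correct, and it reaches the same pivot as the paper---namely, that non-extremality of $f$ is equivalent to the existence of a nonconstant real-valued bounded multiplier $h$ with $fh\in X$, after which $G$ and $h$ are interchanged via $G=Ih$, $h=G/I$. The difference is in how that pivot is reached: the paper simply quotes the abstract characterization of non-extreme points of $\text{\rm ball}(X)$ for a subspace $X\subset L^1$ from Gamelin's book, and then the whole proof is a two-line translation. You instead re-derive that characterization from scratch: in (i)~$\Rightarrow$~(ii) you use the pointwise inequality $|f+h|+|f-h|\ge 2|f|$, the resulting forced equality a.e., and the equality case of the triangle inequality (together with $f\ne0$ a.e., since $F$ is outer) to conclude that $h/f$ is real and bounded by $1$; in (ii)~$\Rightarrow$~(i) you build the perturbation $h=\eps F(G-\la I)$ explicitly, with the constant $\la=\frac1{2\pi}\int_\T|F|\psi\,|d\ze|$ chosen precisely so that $\|f\pm h\|_1=1$. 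Both directions check out, including the two points that need care: the upgrade of $(f+h)/F$ from the Smirnov class to $H^\infty$ via the bounded boundary modulus (the paper does the same for $Ih$), and the verification that $G$ is not a constant multiple of $I$ (your observation that $G=cI$ would force $\ph\equiv0$ and hence $h\equiv0$ is a clean way to handle this). What your route buys is self-containedness---no external reference is needed, and the argument is essentially the classical de Leeuw--Rudin computation adapted to the subspace setting; what the paper's route buys is brevity.
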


\begin{proof} We begin by restating condition (i). In fact, for $X$ as above, it is known (see \cite [Chapter V, Section 9]{Gam}) that a unit-norm function $f\in X$ is a non-extreme point of $\text{\rm ball}(X)$ if and only if there is a nonconstant function $h\in L^\infty_\R$ satisfying $fh\in X$. 
\par Now, if such an $h$ can be found, then $g:=fh$ is in $X$ and condition (ii) is fulfilled with $G:=Ih(=g/F)$. To check that this $G$ is in $H^\infty$, one may note that $Ih\in L^\infty$ and $g/F\in N^+$, where $N^+$ is the Smirnov class (see \cite[Chapter II]{G}). 
\par Conversely, if (ii) holds with a certain $G\in H^\infty$, then $h:=G/I$ is a nonconstant function in $L^\infty_\R$ and $fh(=FG)\in X$. 
\end{proof}

\par Before proceeding with the next result, we pause to introduce a certain class of polynomials that will be needed below. 

\par Given a nonnegative integer $N$ and a polynomial $p$, we say that $p$ is {\it $N$-symmetric} if $\ov z^Np(z)\in\R$ for all $z\in\T$. Equivalently, a polynomial $p$ is $N$-symmetric if (and only if) 
$$\widehat p(N-k)=\ov{\widehat p(N+k)}$$
for all $k\in\Z$; this accounts for the terminology. It follows that the general form of such a polynomial is 
\begin{equation}\label{eqn:gensym}
p(z)=\sum_{k=0}^{N-1}\left(\al_{N-k}-i\be_{N-k}\right)z^k+2\al_0z^N+
\sum_{k=N+1}^{2N}\left(\al_{k-N}+i\be_{k-N}\right)z^k,
\end{equation}
where $\al_0,\dots,\al_N$ and $\be_1,\dots,\be_N$ are real parameters. Arranging these into a vector 
\begin{equation}\label{eqn:coevec}
(\al_0,\al_1,\dots,\al_N,\be_1,\dots,\be_N)\in\R^{2N+1},
\end{equation}
which we call the {\it coefficient vector} of $p$, we arrive at a natural isomorphism between the space of $N$-symmetric polynomials and $\R^{2N+1}$. 

\begin{lem}\label{lem:genformrealcr} Given $N\in\Z_+$ and points $a_1,\dots,a_N\in\D$, let 
$$B(z):=\prod_{j=1}^N\f{z-a_j}{1-\ov a_jz}.$$
The general form of a function $\psi\in H^\infty$ satisfying $\psi/B\in L^\infty_\R$ is then $\psi=p\Phi$, where 
\begin{equation}\label{eqn:capphi}
\Phi(z):=\prod_{j=1}^N(1-\ov a_jz)^{-2}
\end{equation}
and $p$ is an $N$-symmetric polynomial. (If $N=0$, the products are taken to be $1$.)
\end{lem}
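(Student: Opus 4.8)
The plan is to convert the multiplicative condition ``$\psi/B\in L^\infty_\R$'' into a Fourier-analytic condition on the candidate polynomial $p:=\psi/\Phi=\psi\rho^2$, where I abbreviate $\pi(z):=\prod_{j=1}^N(z-a_j)$ and $\rho(z):=\prod_{j=1}^N(1-\ov a_jz)$, so that $B=\pi/\rho$ and $\Phi=\rho^{-2}$. First I would record the elementary boundary identities valid on $\T$, where $\ov z=1/z$: since $\ov{\rho(z)}=\prod_{j=1}^N(1-a_j\ov z)=z^{-N}\pi(z)$, we get $\rho(z)\pi(z)=z^N|\rho(z)|^2$ there. Note also that $\Phi$ is holomorphic on a neighborhood of $\ov\D$ (its only poles sit at the points $1/\ov a_j$, which lie outside $\ov\D$), so $\Phi\in H^\infty$; consequently $p=\psi\rho^2$ lies in $H^\infty$ whenever $\psi$ does, and in particular $p\in H^1$, so $\widehat p(k)=0$ for all $k<0$.

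With these identities in hand, I would compute, on $\T$,
$$\f{\psi}{B}=\f{p}{\rho^2}\cdot\f{\rho}{\pi}=\f{p}{\rho\pi}=\f{p}{z^N|\rho|^2}.$$
Because $|\rho|^2$ is a strictly positive real function on $\T$, the requirement that $\psi/B$ be real-valued there is exactly equivalent to the requirement that $\ov z^Np(z)\in\R$ for all $z\in\T$. This is precisely the defining relation of $N$-symmetry from \eqref{eqn:gensym}, provided one already knows $p$ to be a polynomial of degree at most $2N$. The crux of the argument, and the step I expect to be the main obstacle, is therefore to promote $p$ from a mere $H^\infty$ function to such a polynomial using only the reality condition.

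To do this I would expand the condition $\ov z^Np\in\R$ on $\T$ in Fourier coefficients. Writing $\ov z^Np(z)=z^{-N}p(z)$ on $\T$, its $n$th coefficient is $\widehat p(n+N)$, and reality forces the Hermitian symmetry $\widehat p(N+k)=\ov{\widehat p(N-k)}$ for every $k\in\Z$. For $k>N$ the right-hand side vanishes (as $N-k<0$ and $p\in H^1$), whence $\widehat p(m)=0$ for all $m>2N$; together with $\widehat p(m)=0$ for $m<0$ this shows $\text{\rm spec}\,p\sb\{0,1,\dots,2N\}$, so $p$ is a polynomial of degree at most $2N$, and the same symmetry relation makes it $N$-symmetric. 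This proves that every admissible $\psi$ has the asserted form. The converse direction is the reverse computation: if $p$ is $N$-symmetric then $\psi:=p\Phi\in H^\infty$ (since $\Phi\in H^\infty$), and the displayed identity shows $\psi/B=p/(z^N|\rho|^2)$ is real on $\T$, so $\psi/B\in L^\infty_\R$. The case $N=0$ is trivial, with $\Phi\equiv1$ and $p$ a real constant.
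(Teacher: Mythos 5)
Your proof is correct, and for the substantive half of the lemma it takes a genuinely different route from the paper. The forward implication coincides with the paper's: your identity $\psi/B=\ov z^{N}p/|\rho|^2$ on $\T$ is exactly the paper's identity $\psi/B=\left(\ov z^Np\right)\cdot\left(z^N\Phi/B\right)$ together with the observation that $z^N\Phi/B=1/|\rho|^2\ge0$ there. The converse is where you diverge. The paper rewrites the reality hypothesis as $\psi/B=\ov\psi/\ov B$, concludes that $\psi$ lies in the star-invariant (model) subspace $H^2\ominus zB^2H^2$, and then invokes the known description (citing \cite{DSS} and \cite{N}) of model spaces of finite Blaschke products as rational functions with prescribed poles and decay at infinity; this yields $\psi=p\Phi$ with $\deg p\le 2N$, after which the $N$-symmetry of $p$ is verified separately. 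You instead set $p:=\psi\rho^2\in H^\infty$ from the outset and extract everything from the Fourier expansion of the real-valued function $\ov z^Np$: the Hermitian symmetry $\widehat p(N+k)=\ov{\widehat p(N-k)}$, combined with $\widehat p(m)=0$ for $m<0$, forces $\widehat p(m)=0$ for $m>2N$, so $p$ is a polynomial of degree at most $2N$ and is $N$-symmetric in one stroke. Your argument is more elementary and self-contained---it bypasses the model-space machinery and the external references entirely---at the negligible cost of the explicit bookkeeping with $\pi$ and $\rho$; the paper's version, in exchange, places the lemma in the broader context of kernels of Toeplitz-type conditions, which is the setting of the surrounding literature. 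Both arguments are complete.
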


\begin{proof} If $\psi=p\Phi$, with $p$ an $N$-symmetric polynomial, then it is indeed true that the ratio $\psi/B$ is real-valued on $\T$ (and hence lies in $L^\infty_\R$). To see why, use the identity 
\begin{equation}\label{eqn:multident}
\psi/B=\left(\ov z^Np\right)\cdot\left(z^N\Phi/B\right)
\end{equation}
and the inequality $z^N\Phi/B\ge0$, both valid on $\T$. 
\par Conversely, suppose $\psi\in H^\infty$ is such that $\psi/B\in L^\infty_\R$. Using this last property in the form $\psi/B=\ov\psi/\ov B$, we infer that $\psi$ is orthogonal (in the Hardy space $H^2$) to the shift-invariant subspace $\th H^2$, where $\th:=zB^2$. In other words, $\psi$ belongs to the {\it star-invariant} (or {\it model}) subspace $H^2\ominus\th H^2$. Furthermore, because $\th$ is a finite Blaschke product, it follows (see, e.g., \cite{DSS} or \cite{N}) that $\psi$ is a rational function whose poles, counted with their multiplicities, are contained among those of $\th$ and which satisfies $\lim_{z\to\infty}\psi(z)/\th(z)=0$. This means that $\psi$ is expressible as $p\Phi$ for {\it some} polynomial $p$ of degree at most $2N$. Once this is known, we finally verify that $p$ is $N$-symmetric by invoking \eqref{eqn:multident} and the inequality stated next to it. 
\end{proof}

\section{Proof of Theorem \ref{thm:extrpoipuhonecr}}

We shall proceed in two steps. First, we prove the necessity of condition (a). Second, we show that condition (b) characterizes the extreme points among those unit-norm functions which obey (a). 

\par{\it Step 1.} Assuming that $I$, the inner factor of $f$, does not reduce to a finite Blaschke product of degree at most $M$ (so that (a) fails), we want to conclude that $f$ is not an extreme point of $\text{\rm ball}(H^1_{\mathcal K})$. By Lemma \ref{lem:charextrhonecr}, it suffices to construct a function $G\in H^\infty$, not a constant multiple of $I$, with the properties that
\begin{equation}\label{eqn:firprocr}
G/I\in L^\infty_\R
\end{equation}
and 
\begin{equation}\label{eqn:secprocr}
FG\in H^1_{\mathcal K}.
\end{equation}

\par We know from Frostman's theorem (see \cite[Chapter II]{G}) that there exists a point $w\in\D$ for which 
$$\ph:=\f{I-w}{1-\ov wI}$$
is a (finite or infinite) Blaschke product. Furthermore, our current assumption on $I$ guarantees that $\ph$ has at least $M+1$ zeros. Consequently, $\ph$ admits a factorization 
\begin{equation}\label{eqn:phiphiphicr}
\ph=\ph_1\ph_2, 
\end{equation}
where $\ph_1,\ph_2$ are Blaschke products and $\ph_1$ has precisely $M+1$ zeros. Setting $N:=M+1$, we therefore have
\begin{equation}\label{eqn:finblacr}
\ph_1(z)=\prod_{j=1}^N\f{z-a_j}{1-\ov a_jz}
\end{equation}
for some $a_1,\dots,a_N\in\D$. Next, we consider the function $g:=1-\ov wI\,(\in H^\infty)$ and observe that $I/\ph=g/\ov g$ a.e. on $\T$. When coupled with \eqref{eqn:phiphiphicr}, this yields
\begin{equation}\label{eqn:ieqprodcr}
I=\ph_1\ph_2g/\ov g.
\end{equation}

\par Our plan is to construct a function $G\in H^\infty$ satisfying \eqref{eqn:firprocr} and \eqref{eqn:secprocr} in the form 
\begin{equation}\label{eqn:intheformcr}
G=g^2p\Phi\ph_2,
\end{equation}
where $p$ is an $N$-symmetric polynomial and $\Phi$ is given by \eqref{eqn:capphi}, with the $a_j$'s coming from \eqref{eqn:finblacr}. In fact, any such $G$ belongs to $H^\infty$ and makes \eqref{eqn:firprocr} true. To verify the latter claim, combine \eqref{eqn:ieqprodcr} and \eqref{eqn:intheformcr} to find that 
$$G/I=G\ov I=|g|^2p\Phi\ov\ph_1$$
a.e. on $\T$; then apply Lemma \ref{lem:genformrealcr} with $B=\ph_1$ to deduce that the function $p\Phi\ov\ph_1$ (and hence also $|g|^2p\Phi\ov\ph_1$) is in $L^\infty_\R$. 

\par We also need to ensure \eqref{eqn:secprocr}, as well as the condition 
\begin{equation}\label{eqn:govineco}
G/I\ne\const,
\end{equation}
by choosing $p$ appropriately. To this end, we set $\mathcal F_0:=Fg^2\Phi\ph_2\,(\in H^1)$ and note that $FG=\mathcal F_0p$, so that \eqref{eqn:secprocr} boils down to requiring that the numbers 
$$\ga_j(p):=\widehat{(\mathcal F_0p)}(k_j),\qquad j=1,\dots,M,$$
be null. Now let $T$ be the linear map, defined on the space of all $N$-symmetric polynomials, that takes $p$ to the vector 
$$\left(\text{\rm Re}\,\ga_1(p),\,\text{\rm Im}\,\ga_1(p),\dots,
\text{\rm Re}\,\ga_M(p),\,\text{\rm Im}\,\ga_M(p)\right).$$
Identifying an $N$-symmetric polynomial $p$ with its coefficient vector (see Section 2 above), we may view $T$ as a linear operator from $\R^{2N+1}(=\R^{2M+3})$ to $\R^{2M}$. Its rank being obviously bounded by $2M$, we deduce from the rank-nullity theorem (see, e.g., \cite[p.\,63]{Axl}) that the kernel of $T$, to be denoted by $\mathfrak N_T$, satisfies $\text{\rm dim}\,\mathfrak N_T\ge3$. In particular, we can find two linearly independent $N$-symmetric polynomials, say $p_1$ and $p_2$, in $\mathfrak N_T$. When plugged into \eqref{eqn:intheformcr} in place of $p$, each of these makes \eqref{eqn:secprocr} true, while at least one of them ensures \eqref{eqn:govineco} as well. This completes the construction and proves the necessity of condition (a) in the theorem. 

\smallskip {\it Step 2.} From now on, we assume that (a) holds, so that $I$ is given by \eqref{eqn:finblaprodegmcr} with $0\le m\le M$ and $a_1,\dots,a_m\in\D$. In view of Lemma \ref{lem:charextrhonecr}, our function $f(=IF)$ will be an extreme point of $\text{\rm ball}(H^1_{\mathcal K})$ if and only if every $G\in H^\infty$ satisfying \eqref{eqn:firprocr} and \eqref{eqn:secprocr} is necessarily a constant multiple of $I$. Our purpose is therefore to prove that the latter condition is equivalent to (b). 

\par The functions $G\in H^\infty$ we should consider are those of the form 
\begin{equation}\label{eqn:geqpphicr}
G=p\Phi_0,
\end{equation}
where $p$ is an $m$-symmetric polynomial and 
$$\Phi_0(z):=\prod_{j=1}^m(1-\ov a_jz)^{-2}.$$
Indeed, Lemma \ref{lem:genformrealcr} (coupled with our current assumption on $I$) tells us that these are precisely the $G$'s that enjoy property \eqref{eqn:firprocr}. Now, if $F_0\in H^1$ is the function defined by \eqref{eqn:deffzerocr}, or equivalently by $F_0:=F\Phi_0$, then we have $FG=F_0p$ and condition \eqref{eqn:secprocr} takes the form 
\begin{equation}\label{eqn:saporana}
\widehat{(F_0p)}(k_j)=0,\qquad j=1,\dots,M.
\end{equation}
Rewriting these Fourier coefficients as convolutions and splitting each of the resulting equations into a real and imaginary part, we arrive at $2M$ real equations that recast \eqref{eqn:saporana} in terms of the coefficient vector 
\begin{equation}\label{eqn:coevecm}
(\al_0,\al_1,\dots,\al_m,\be_1,\dots,\be_m)
\end{equation}
of $p$. (It is understood that $p$ is related to \eqref{eqn:coevecm} as \eqref{eqn:gensym} is to \eqref{eqn:coevec}, but with $m$ in place of $N$.) Once these routine calculations are performed, we eventually rephrase \eqref{eqn:saporana}---and hence \eqref{eqn:secprocr}---by saying that the vector \eqref{eqn:coevecm} belongs to the subspace $\mathcal N:=\text{\rm ker}\,\mathfrak M$, the kernel of the linear operator $\mathfrak M:\R^{2m+1}\to\R^{2M}$ given by \eqref{eqn:blockmatrixcr}. 

\par This makes it easy to decide whether conditions \eqref{eqn:firprocr} and \eqref{eqn:secprocr} imply, for a function $G\in H^\infty$, that $G/I=\const$. Namely, this implication is valid if and only if $\dim\mathcal N=1$. To see why, one should observe first that $\mathcal N$ always contains a nonzero vector. Specifically, the $m$-symmetric polynomial 
$p_0:=I/\Phi_0$ solves \eqref{eqn:saporana}, so its coefficient vector is in $\mathcal N$. Now, if $\dim\mathcal N=1$, then any other $m$-symmetric polynomial $p$ solving \eqref{eqn:saporana} is given by $p=cp_0$ with some $c\in\R$; accordingly, the functions $G$ produced by \eqref{eqn:geqpphicr} are all of the form $G=cI$. Conversely, if $\dim\mathcal N>1$, then we can find an $m$-symmetric polynomial $p$ with $p/p_0\ne\const$ that makes \eqref{eqn:saporana} true, so the associated function $G(=p\Phi_0)$ satisfies \eqref{eqn:govineco}. 

\par To summarize, among the unit-norm functions $f=IF\in H^1_{\mathcal K}$ that obey (a), the extreme points of $\text{\rm ball}(H^1_{\mathcal K})$ are characterized by the property that $\mathcal N$, the kernel in $\R^{2m+1}$ of the linear map \eqref{eqn:blockmatrixcr}, has dimension $1$. Finally, another application of the rank-nullity theorem allows us to restate the latter condition as $\text{\rm rank}\,\mathfrak M=2m$, and we arrive at (b). The proof is complete. 

\medskip

\end{document}